\documentclass [12pt]{amsart}
\usepackage{amssymb,amsmath,amscd,amsthm}

\textheight=240mm
\textwidth=160mm
\topmargin=-14mm
\headsep=7mm
\oddsidemargin=0mm\evensidemargin=0mm
\marginparwidth=36pt
\tolerance=3000
\hbadness=2000
\raggedbottom
\clubpenalty=10000
\widowpenalty=10000

\begin{document}

\newtheorem{proposition}{Proposition}
\newtheorem{lemma}{Lemma}
\newtheorem{theorem}{Theorem}
\newtheorem{corollary}{Corollary}
\newtheorem*{corol}{Corollary}
\newtheorem*{proposition*}{Proposition}
\newtheorem*{lemma*}{Lemma}
\newtheorem*{theorem*}{Theorem}
\newtheorem*{corollary*}{Corollary}

\newtheorem{definition}{Definition}
\newtheorem{remark}{Remark}
\newtheorem{example}{Example}
\newtheorem*{definition*}{Definition}
\newtheorem*{remark*}{Remark}
\newtheorem*{example*}{Example}

\newcommand \Aut {\,\mathrm{Aut}\,}
\newcommand \GL {\,\mathrm{GL}\,}
\newcommand \SL {\,\mathrm{SL}\,}
\newcommand \PGL {\,\mathrm{PGL}\,}
\newcommand \PSL {\,\mathrm{PSL}\,}
\newcommand \im {\,\mathrm{Im}\,}
\newcommand \ord {\,\mathrm{ord}\,}
\newcommand \eps {\varepsilon}
\newcommand \define { \stackrel{def}\Longleftrightarrow }
\newcommand \then \Rightarrow
\renewcommand \iff \Leftrightarrow
\newcommand{\Iff}{\Longleftrightarrow}
\newcommand \Z[1] {\mathbb Z_{#1}}
\newcommand \mapson[1] {\overset{#1}\mapsto}
\newcommand \F {\mathbf F}
\newcommand \Pair[1][{}] {(\xi_{#1}, \eps_{#1})}
\newcommand \id {\,\mathrm{id}\,}
\renewcommand \leq \leqslant
\renewcommand \geq \geqslant

\def\udk{512.541.6 + 510.67}
\title{Elementary equivalence of the automorphism groups of reduced Abelian $p$-groups}

{\Large {\bf Elementary equivalence of the automorphism groups of reduced Abelian $p$-groups }}

\vskip 0.1cm \centerline{\large \large M.A.~Roizner}
\vskip 0.5cm

{\large {\bf Annotation }}
Consider unbounded reduced Abelian $p$-groups ($p \geq 3$) $A_1$ and $A_2$. In this paper, we prove that if the automorphism groups $\Aut A_1$ and $\Aut A_2$ are elementary equivalent then the groups $A_1$ and $A_2$ are equivalent in the second order logic bounded by the final rank of the basic subgroups of $A_1$ and $A_2$.

\section{Introduction}

In this paper, we consider elementary properties (i.\,e. properties expressible in the first order logic)
of the automorphism groups of reduced Abelian $p$-groups.

The first who considered connection of elementary properties of the different models with elementary properties of derivative models was A.I.\,Maltsev \cite{Maltsev} in 1961. He proved that the groups $G_n(K)$ and $G_m(L)$, where $G=\GL,\SL,\PGL,\PSL$ and $n,m\geqslant 3$, $K,L$ are fields of characteristics~$0$, are elementary equivalent iff $m=n$ and fields $K$ and $L$ are elementary equivalent.

In 1992, this theory was continued with the help of ultraproduct construction and Keisler-Chang Isomorphism Theorem by K.I.\,Beidar and A.V.\,Mikhalev in \cite{BeiMikh}, in which they found a general approach to problems of elementary equivalence of different algebraic structures and generalized Maltsev theorem to the case of $K$ and $L$ being skew fields or associative rings.

Continuation of this research was made in papers by E.I.\,Bunina (\cite{Bun1}--\cite{Bun4}, 1998--2009), in which the results of A.I\,Maltsev were extended for unitary linear groups over skew fields and associative rings with involutions, and also for Chevalley groups over fields and local rings.

In 2000, V.Tolstikh considered in \cite{Tolstyh} the connection of the second order properties of skew fields with the first order properties of automorphism groups of spaces of infinite dimension over these skew fields. In 2003, E.I.~Bunina and A.V.~Mikhalev considered the connection of the second order properties of associative rings and the first order properties of categories of modules, endomorphism rings, automorphism groups and projective spaces of modules of infinite rank over these rings (see~\cite{categories}).

In~\cite{Abelian}, E.I.~Bunina and A.V.~Mikhalev discovered connection of second order properties of an Abelian $p$-group with first order properties of its endomorphism ring (the analogue of Baer--Kaplansky Theorem for elementary equivalence).

In~\cite{My1}, E.I.~Bunina and M.A.~Roizner discovered connection of first order properties of the automorphism group of an Abelian $p$-group with second order properties of the divisible part and the basic subgroup of the group.

This paper continues the paper~\cite{My1}. We discover connection of first  order properties of the automorphism group of an Abelian $p$-group with second order properties of the group bounded by its final rank provided that the group is reduced and $p>2$.

\section{Background}

It is said that an~element~$a\in A$ is \emph{divisible by}
a~positive integer~$n$ (denoted as $n\mid a$)
if there is an element $x\in A$ such that $nx=a$. A~group~$D$ is called \emph{divisible} if $n\mid a$ for all $a\in D$ and all natural~$n$.
The groups $\mathbb Q$ and $\mathbb Z(p^\infty)$ are examples of divisible groups. A~group~$A$ is called \emph{reduced} if it has no nonzero divisible subgroups.

A~subgroup $G$ of a~group~$A$ is called \emph{pure} if the equation $nx=g\in G$ is
solvable in~$G$ whenever it is solvable in the whole group~$A$.
In other words, $G$~is pure if and only if
$$
\forall n\in \mathbb Z\quad nG=G\cap nA.
$$

A~subgroup~$B$ of a~group~$A$ is called
a~$p$-\emph{basic subgroup} if it satisfies the following constraints:
\begin{enumerate}
\item
$B$ is a~direct sum of cyclic $p$-groups and infinite
cyclic groups;
\item
$B$ is pure in~$A$;
\item
$A/B$ is $p$-divisible.
\end{enumerate}

Every group, for every prime~$p$, contains $p$-basic
subgroups~(\cite{Fuks}).

We now focus on $p$-groups, where $p$-basic subgroups are particularly
important. If $A$ is a~$p$-group and $q$ is a~prime different from~$p$
then, evidently, $A$ has only one $q$-basic subgroup, namely~$0$.
Therefore, in $p$-groups we may refer to the $p$-basic subgroups
simply as \emph{basic} subgroups without confusion.

We need the following facts about basic subgroups.

\begin{theorem}[\cite{Sele7}]\label{BasicMaxBounded}
Assume that $B$ is a~subgroup of a~$p$-group~$A$,
$B=\bigoplus\limits_{n=1}^\infty B_n$, and $B_n$~is a~direct
sum of groups~$\mathbb Z(p^n)$.
Then $B$~is a~basic subgroup of~$A$ if and only if
for every integer $n> 0$, the subgroup $B_1\oplus \dots\oplus B_n$
is a~maximal $p^n$-bounded direct summand of~$A$.
\end{theorem}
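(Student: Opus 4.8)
The statement is the classical characterization of basic subgroups of a $p$-group as the "union" of maximal bounded direct summands, so the natural approach is to prove the two implications separately, each reducing the abstract conditions (direct sum of cyclics, purity, $p$-divisibility of the quotient) to the concrete one about $B_1\oplus\dots\oplus B_n$ being a maximal $p^n$-bounded direct summand.

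For the forward direction, I would start from the hypothesis that $B=\bigoplus_{n\ge 1}B_n$ is basic, and set $C_n=B_1\oplus\dots\oplus B_n$, so $C_n$ is $p^n$-bounded. Since $B$ is pure in $A$ and $C_n$ is a direct summand of $B$ with $B/C_n$ having all its summands of order $\ge p^{n+1}$, purity of $C_n$ in $B$ combined with purity of $B$ in $A$ gives purity of $C_n$ in $A$; a bounded pure subgroup is a direct summand (this is the standard fact that $p^n$-bounded pure subgroups split off), so $A=C_n\oplus A_n'$ for some $A_n'$. To see maximality, suppose $A=D\oplus D'$ with $D$ a $p^n$-bounded summand containing $C_n$ properly; I would push this through the decomposition $A/C_n\cong (B/C_n)\oplus(\text{something})$ and use that $B/C_n$ is again basic in $A/C_n$ (basic subgroups pass to quotients by summands of the basic subgroup), deriving that $B/C_n$ would then have a nonzero $p^n$-bounded summand, contradicting that $B/C_n=\bigoplus_{m>n}B_m$ has no elements of order $\le p^n$ outside finitely-generated pieces — more precisely, contradicting that $p^n(B/C_n)$ is "large" in the appropriate sense. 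This maximality argument is where I expect the main obstacle: one has to rule out *any* larger bounded summand, not just larger ones of a special form, and that requires knowing that a $p^n$-bounded summand of $A$ intersects $B$ in a $p^n$-bounded summand of $B$, which in turn rests on a Kulikov-type exchange argument.

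For the converse, I assume that for every $n$ the subgroup $C_n=B_1\oplus\dots\oplus B_n$ is a maximal $p^n$-bounded direct summand of $A$, and I must verify the three defining properties of a basic subgroup. Condition (1) is immediate since $B=\bigcup_n C_n=\bigoplus_n B_n$ is by hypothesis a direct sum of cyclic $p$-groups. For purity (2): given $x\in A$ with $p^k x = b\in B$, I want to solve $p^k y = b$ inside $B$; choosing $n$ with $b\in C_n$ and large enough that $p^k$ divides into the orders present, I use that $C_{n+k}$ (say) is a direct summand of $A$, write $x=c+a'$ along $A=C_{n+k}\oplus A'$, and observe $p^k x = p^k c + p^k a'$ with $p^k a'=0$ forced once $b\in C_{n+k}$ and $C_{n+k}$ is a summand, so the equation is already solvable in $C_{n+k}\subseteq B$. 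For (3), $p$-divisibility of $A/B$: given $a\in A$, I must find $b\in B$ with $a-b\in pA$; fixing $n$ and writing $A=C_n\oplus A_n'$, the maximality of $C_n$ forces $A_n'$ to have no $p^n$-bounded summand, hence (for $p$-groups) $A_n'[p]\subseteq pA_n'+ (\text{higher})$ — iterating this over growing $n$ and using that $B$ meets every coset modulo $pA$ because the $C_n$ exhaust the "bounded part" of $A$, I get $a\equiv b\pmod{pA}$ for suitable $b\in B$. The technical heart of the converse is again the same splitting lemma (pure bounded implies summand) together with the observation that "maximal $p^n$-bounded summand" forces the complement to be $p$-divisible modulo bounded pieces, which is the concrete incarnation of $A/B$ being $p$-divisible.

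Throughout, the two workhorses are: (a) a bounded pure subgroup of any Abelian group is a direct summand, and (b) an exchange/refinement lemma letting one intersect a given bounded summand of $A$ with $B$ to get a bounded summand of $B$. If (b) is available as a cited fact I would simply invoke it; otherwise I would prove it by induction on the bound, peeling off one layer $B_n$ at a time and using Kulikov's criterion for direct sums of cyclics. I do not expect the first-order/second-order logical apparatus of the paper to enter this particular theorem at all — it is a purely group-theoretic input quoted from Szele, and the proof is the standard one.
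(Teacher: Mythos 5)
The paper does not prove this statement at all: Theorem~\ref{BasicMaxBounded} is quoted as background with a citation to Szele~\cite{Sele7} (it is the classical result reproduced as Theorem~33.3 in Fuchs, Vol.~I), so there is no proof in the paper to compare yours against; you are right that the paper's logical machinery plays no role here. Judged on its own, your plan has the correct architecture and the easy parts are fine (purity of $C_n=B_1\oplus\dots\oplus B_n$ in $A$ via transitivity, ``bounded pure implies summand,'' and your purity argument in the converse, which works as written). But the two places you yourself flag as the heart are exactly where the sketch is not yet a proof. For maximality, your workhorse (b) --- that a $p^n$-bounded summand of $A$ meets $B$ in a $p^n$-bounded summand of $B$ --- is not the standard tool and, even if granted, does not finish the job: from $D\cap B=C_n$ for a hypothetical larger bounded summand $D$ you get no contradiction, since $D$ may simply stick out of $B$. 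The standard route is to prove $A=C_n\oplus\bigl(B_n^*+p^nA\bigr)$ with $B_n^*=\bigoplus_{m>n}B_m$, check that every element of order $p$ in $B_n^*+p^nA$ has height at least $n$ (using $B_n^*[p]\subseteq p^nB_n^*$ and purity of $B_n^*$), conclude that this complement has no nonzero $p^n$-bounded direct summand, and then transport a hypothetical extra summand $E$ (with $C_n\oplus E$ a larger bounded summand) into the complement by a projection argument. Note also that it is $A/C_n$, not $B/C_n$, that would acquire the offending small summand; the contradiction comes from the height computation above, which is the precise form of your vague ``$p^n(B/C_n)$ is large.''

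In the converse, the $p$-divisibility step as written (``$B$ meets every coset modulo $pA$ because the $C_n$ exhaust the bounded part'') is essentially a restatement of what must be proved, namely $A=B+pA$. The correct argument runs: maximality of $C_n$ forces any complement $A_n'$ in $A=C_n\oplus A_n'$ to have no cyclic direct summand of order at most $p^n$; hence, by the lemma that an element $x$ of order $p^{h+1}$ with $h(p^hx)=h$ generates a direct summand, every element of $A_n'[p]$ has height at least $n$ in $A_n'$; a downward induction on the order then gives that every element of $A_n'$ of order at most $p^n$ lies in $pA_n'$. Now for $a\in A$ of order $p^k$ choose $n\ge k$, write $a=c+a'$ along $A=C_n\oplus A_n'$, and conclude $a\in B+pA$; iterating yields $A=B+p^mA$ for all $m$, i.e.\ $A/B$ is $p$-divisible. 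With these two repairs (and dropping lemma~(b)) your outline becomes the standard proof of Szele's theorem.
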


\begin{theorem}[\cite{Fuks}]\label{BasicEndIm}
A basic subgroup of a~$p$-group~$A$ is an endomorphic image of the group~$A$.
\end{theorem}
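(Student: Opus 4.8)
The plan is to manufacture the required endomorphism from the projections attached to the maximal bounded direct summands of $A$. First I would write $B=\bigoplus_{n\geq 1}B_n$, with $B_n$ a direct sum of copies of $\mathbb{Z}(p^n)$, put $C_n=B_1\oplus\dots\oplus B_n$, and use Theorem~\ref{BasicMaxBounded} to get decompositions $A=C_n\oplus A_n$. A standard computation --- using that $B$ is pure in $A$ and $A/B$ is $p$-divisible, so that $A=B+p^nA$ and $B\cap p^nA=p^nB$ --- shows the complement may be taken to be $A_n=(B_{n+1}\oplus B_{n+2}\oplus\cdots)+p^nA$, whence $A_n=B_{n+1}\oplus A_{n+1}$ and $p^nA\subseteq A_n$. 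Consequently each $B_k$ is itself a direct summand of $A$, with complement $C_{k-1}\oplus A_k$; write $q_k\colon A\to B_k$ for the corresponding projection, which restricts on $B$ to the canonical projection onto the $k$-th summand.

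If $B$ is bounded the proof is over at once: then only finitely many $B_n$ are nonzero, $B$ is a bounded pure subgroup of $A$, hence a direct summand, and the attendant projection is the desired endomorphism. So assume $S:=\{n:B_n\neq 0\}$ is infinite. The naive choice $\phi=\sum_k q_k$ is not an endomorphism, since its partial sums are the projections $A\to C_n$ and these need not stabilise at a given element of $A$ (as happens when $A$ is torsion-complete). The idea instead is to read each $B_k$ out of a much \emph{deeper} summand $B_{d(k)}$. Writing $\kappa_n$ for the number of cyclic summands of $B_n$, the combinatorial core is this: since there is no infinite strictly descending chain of cardinals, all but finitely many $n\in S$ admit cofinally many $m\in S$ with $\kappa_m\geq\kappa_n$; let $F$ be the finite set of exceptions. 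Then one can choose an injection $d\colon S\setminus F\to S$ with image disjoint from $F$, satisfying $d(k)\geq 2k$ and $\kappa_{d(k)}\geq\kappa_k$ for all $k$, together with, for each $k\in S\setminus F$, a surjection $\lambda_k\colon B_{d(k)}\twoheadrightarrow B_k$ built as follows: partition the summands of $B_{d(k)}$ into $\kappa_k$ nonempty blocks, and inside each block send one generator onto a generator of the corresponding $\mathbb{Z}(p^k)\subseteq B_k$ via the reduction $\mathbb{Z}(p^{d(k)})\to\mathbb{Z}(p^k)$, sending the remaining generators to $0$.

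Now I would set $\psi_k=q_k$ for $k\in F$ and $\psi_k=\lambda_k\circ q_{d(k)}$ for $k\in S\setminus F$, and put $\phi=\sum_{k\in S}\psi_k$; the claim is that $\phi$ is a well-defined endomorphism of $A$ with $\phi(A)=B$. It is pointwise finite: if $a\in A$ has order $\leq p^N$ then for $k\in S\setminus F$ with $k\geq N$ the element $q_{d(k)}(a)$ has order $\leq p^N\leq p^k$, hence lies in $p^{\,d(k)-k}B_{d(k)}$, a subgroup that $\lambda_k$ annihilates since $d(k)-k\geq k$; thus only finitely many $\psi_k(a)$ are nonzero and $\phi(a)$ lies in $B$. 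It maps onto $B$: the ``source summand'' of $\psi_k$ --- namely $B_k$ if $k\in F$ and $B_{d(k)}$ otherwise --- is a direct summand of $A$, and for distinct $k$ these source summands are distinct (by injectivity of $d$ and disjointness of its image from $F$); so, given $b=\sum_{k\in T}b_k\in B$ with $T$ finite and $b_k\in B_k$, I take $a$ inside the finite direct sum of the source summands of the $k\in T$, with its component in each one chosen to map onto $b_k$ (possible since $q_k$, resp.\ $\lambda_k$, is surjective), and then $\phi(a)=b$ --- every other $\psi_{k'}(a)$ vanishes because the relevant source summand was not used.

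The step I expect to be the real obstacle is the coordinated construction of the deep-reading data: an assignment $k\mapsto d(k)$ that is simultaneously injective, arbitrarily deep, and carries surjections $B_{d(k)}\twoheadrightarrow B_k$ killing everything of order $\leq p^k$. Injectivity and depth are exactly what make $\sum_k\psi_k$ pointwise finite --- hence an endomorphism at all --- while surjectivity of the $\lambda_k$ is what forces the image up to the whole of $B$; the only way this can go wrong is that $\kappa_k$ might dominate $\kappa_m$ for every deep $m$, and the well-ordering of the cardinals confines that pathology to the finite set $F$, which is then absorbed by using the bare projection $q_k$. Everything else --- the computation identifying the complements $A_n$, the additivity of $\phi$, and the consistency of the component choices in the surjectivity step --- is routine.
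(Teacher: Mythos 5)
Your argument is correct: the decompositions $A=B_1\oplus\dots\oplus B_n\oplus(B_{n+1}\oplus B_{n+2}\oplus\cdots+p^nA)$ do hold, the well-ordering of cardinals does confine the ``bad'' indices to a finite set $F$, and the map $\phi=\sum_k\psi_k$ built from the projections $q_k$ (for $k\in F$) and the shifted maps $\lambda_k\circ q_{d(k)}$ (with $d(k)\geq 2k$, $\kappa_{d(k)}\geq\kappa_k$, $d$ injective with image missing $F$) is pointwise finite, hence an endomorphism, and maps onto $B$ exactly as you say. The paper itself gives no proof of this statement --- it quotes it from Fuchs (it is Szele's theorem) --- and your construction is essentially the classical Szele-type argument from that source, reconstructed with the cardinal bookkeeping made explicit, so there is nothing to object to.
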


An infinite system $L=\{ a_i\}_{i\in I}$ of elements of the group~$A$ is called \emph{independent} if every finite subsystem of~$L$ is independent. An independent system~$M$ of~$A$ is \emph{maximal} if there is no independent system in~$A$ containing~$M$ properly. By the \emph{rank} $r(A)$ of a~group~$A$ we mean the cardinality of a~maximal independent system containing only elements of infinite and prime power orders.
The \emph{final rank} of a~basic subgroup~$B$ of a~$p$-group~$A$ is the
infimum of the cardinals $r(p^nB)$.

In the paper~\cite{My1}, E.I.~Bunina and M.A.~Roizner introduced certain formulas for operating with involutions (i.\,e.~automorphisms of order~$2$). The declarations of these formulas follow below.

An involution~$\eps$ corresponds to the decomposition of the group~$A$ into direct sum $A = A_\eps^+ \oplus A_\eps^-$, where $A_\eps^+=\{ a\in A\mid \eps a=a\}$ and $A_\eps^-=\{ a\in A\mid \eps a=-a\}$.

Formula~$Extreme(\eps)$ means that the automorphism~$\eps$ is an extreme involution (i.\,e. an involution which has one of its summands~$A_\eps^+$ and $A_\eps^-$ as indecomposable).
The indecomposable summand for the~involution~$\eps$ is denoted by~$A_\eps$, while the other summand is denoted by~$A_\eps^\perp$.

With only an~involution~$\xi$, one cannot distinguish the groups$A_\xi^+$ and $A_\xi^-$ in the first order language. Therefore we consider pairs~$(\xi, \eps)$ with the condition~$Extreme(\eps)\land\xi\eps = \eps\xi$. For such pairs, either $A_\eps \subset A_\xi^+$ or $A_\eps \subset A_\xi^-$. $A_\eps$ indicates the required group among~$A_\xi^+$ and~$A_\xi^-$ (it is denoted by~$A_{(\xi, \eps)}$). The property of being a pair is denoted by the formula~$Pair(\xi, \eps) \define \xi^2 = 1 \land Extreme(\eps) \land \xi\eps = \eps\xi$. Instead of $\forall \xi \forall \eps (Pair(\xi, \eps) \then (\dots))$ and $\exists \xi \exists \eps (Pair(\xi, \eps) \land (\dots))$, we will write $\forall (\xi, \eps)$ and $\exists(\xi, \eps)$ respectively.

The following formulas dealing with involutions, extreme involutions and involution pairs, were defined in the paper~\cite{My1}~(p.\,7--8, 10, 24):

1) $\eps \in \eps_1 \oplus \eps_2$ iff $A_\eps \subset A_{\eps_1}
\oplus A_{\eps_2}$ and $A_\eps^\perp\supset A_{\eps_1}^\perp \cap
A_{\eps_2}^\perp$ for extreme involutions~$\eps$, $\eps_1$, $\eps_2$ such that $\eps_1
\eps_2 = \eps_2 \eps_1$;

2) $ \eps_2 \in (\xi_1, \eps_1)$ iff $A_{\eps_2} \subset A_{(\xi_2, \eps_2)}$ for an~extreme involution~$\eps_2$ and a~pair~$(\xi_1, \eps_1)$;

3) $(\xi_1, \eps_1) \subset (\xi_2, \eps_2)$ iff $A_{(\xi_1, \eps_1)} \subset A_{(\xi_2, \eps_2)}$;

4) $(\xi_1, \eps_1) = (\xi_2, \eps_2)$ iff $A_{(\xi_1, \eps_1)} = A_{(\xi_2, \eps_2)}$;

5) $(\xi_1, \eps_1) \cap (\xi_2, \eps_2) = (\xi_3, \eps_3)$ iff $A_{(\xi_3, \eps_3)} = A_{(\xi_1, \eps_1)} \cap A_{(\xi_2, \eps_2)}$;

6) $(\xi_1, \eps_1) \oplus (\xi_2, \eps_2) = (\xi_3, \eps_3)$ iff $A_{(\xi_3,\eps_3)} = A_{(\xi_1, \eps_1)} \oplus A_{(\xi_2, \eps_2)}$;

7) $\overline{(\xi_1, \eps_1)} = (\xi_2, \eps_2)$ iff $A_{(\xi_1, \eps_1)} \oplus A_{(\xi_2, \eps_2)} = A$;

8) the formula $f(\eps_1) = \eps_2$ for extreme involutions~$\eps_1, \eps_2$ and an automorphism~$f$ means that~$f(A_{\eps_1}) = A_{\eps_2}$. But since this situations is possible if only the summands~$A_{\eps_1}$ and~$A_{\eps_2}$ have equals orders it is convenient to define another formala for matching summands which have different orders:
\begin{multline*}
    \eps_1 \mapson{f} \eps_2 \define
        Extreme(\eps_1) \land
        Extreme(\eps_2) \land
        f(\eps_1) \in \eps_1 \oplus \eps_2 \land
        f(\eps_1) \ne \eps_1 \land
        f(\eps_1) \ne \eps_2;
\end{multline*}

9) the formula $ord (\eps_1) < ord (\eps_2)$ means that the order of an~involution~$\eps_1$ (i.\,e. the order of the corresponding summand~$A_{\eps_1}$) is less than the order of an~involution~$\eps_2$. Similarly, all the other order relations can be defined.

\section{Specifying basic subgroup}

Let $A$ be an~unbounded reduced Abelian $p$-group with cardinality~$\mu$ and final rank~$\mu_{fin}$ of~the basic subgroup. There exists a decomposition~$A = A_1 \oplus A_2$ such that the order of any indecomposable subgroup of the group~$A_1$ is less than the order of any indecomposable subgroup of the group~$A_2$ and the basic subgroup of $A_2$ has rank~$\mu_{fin}$.
The formula specifying this decomposition follows:

\begin{lemma}\label{Lemma1}
Define a~formula:
\[
    ByOrd\Pair \define
        \forall \eps' \Big(
            Extreme(\eps') \then \big(
                \eps' \in \Pair \iff
                    ord(\eps') \geq ord(\eps)
            \big)
        \Big).
\]

The~formula
\begin{multline*}
    Final\Pair[0] \define
        ByOrd\Pair[0] \land
        \forall \Pair[1] \subsetneq \Pair[0] \bigg(
            ByOrd\Pair[1] \then\\
            \then \exists f \Big(
                \big(
                    \forall \eps \in \overline{\Pair[1]}\quad
                    f(\eps) = \eps
                \big) \land
                \big(
                    \forall \eps \in \Pair[0] \cap \overline{\Pair[1]}\quad
                        \exists \eps' \in \Pair[1]\quad
                            \eps' \mapson{f} \eps
                \big)
            \Big)
        \bigg)
\end{multline*}
specifies the decomposition~$A = A_1 \oplus A_2$, where $A_1 = A_{\overline{\Pair[0]}}$, $A_2 = A_{\Pair[0]}$.
\end{lemma}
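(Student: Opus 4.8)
The plan is to read $Final\Pair[0]$ as a condition on the summand $A_2:=A_{\Pair[0]}$ and its complement $A_1:=A_{\overline{\Pair[0]}}$: the conjunct $ByOrd\Pair[0]$ should force $A_2$ to collect exactly the indecomposable direct summands of $A$ of order $\geq\ord(\eps_0)$, while the second conjunct, through the automorphism $f$, should forbid $A_2$ from being larger than the piece of basic rank $\mu_{fin}$. First I would identify the $ByOrd$-pairs. Using the semantics of the formulas of \cite{My1} and Theorem~\ref{BasicMaxBounded}, one shows that $ByOrd\Pair$ holds iff $A_{\overline{\Pair}}$ is a \emph{maximal} $p^{k-1}$-bounded direct summand of $A$, where $p^k=\ord(\eps)$; equivalently, having fixed a basic subgroup $B=\bigoplus_{n\geq1}B_n$ with $B_n$ homogeneous of exponent $n$ and put $\kappa_n=r(B_n)$, this means $A_{\overline{\Pair}}\cong B_1\oplus\dots\oplus B_{k-1}$ and $\kappa_k\neq0$. (``$\Leftarrow$'' is Theorem~\ref{BasicMaxBounded}; for ``$\Rightarrow$'', first $A_{\overline{\Pair}}$ is $p^{k-1}$-bounded because a reduced $p$-group with bounded basic subgroup equals it and a cyclic summand of order $\geq p^k$ in $A_{\overline{\Pair}}$ would lie in $A_{\Pair}\cap A_{\overline{\Pair}}=0$; then a cyclic summand of order $<p^k$ in any enlargement of the bounded complement contradicts the biconditional.) Hence the $ByOrd$-pairs form a chain under $\subseteq$: the one at ``level $k$'' has $A_{\Pair}$ of basic rank $\rho_{k-1}:=r(p^{k-1}B)=\sum_{n\geq k}\kappa_n$, and increasing $k$ through values with $\kappa_k\neq0$ shrinks $A_{\Pair}$; at the least such level, $A_{\Pair}=A$.

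Second, I would analyse the automorphism clause. Fix a $ByOrd$-pair $\Pair[0]$ at level $k_0$ and a $ByOrd$-pair $\Pair[1]\subsetneq\Pair[0]$, necessarily at a level $k_1>k_0$. Write $G:=A_{\Pair[1]}$, $H:=A_{\overline{\Pair[1]}}$ (so $A=G\oplus H$, $H$ is $p^{k_1-1}$-bounded and $G$ has basic rank $\rho_{k_1-1}$) and $M:=A_{\Pair[0]}\cap A_{\overline{\Pair[1]}}=A_2\cap H$, a $p^{k_1-1}$-bounded summand, hence a direct sum of cyclics equal to its own basic subgroup, for which additivity of basic subgroups on $A_2=G\oplus M$ gives $\rho_{k_0-1}=\rho_{k_1-1}+r(M)$. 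Unwinding the formulas~1), 2) and 8) of \cite{My1}, the inner existential of $Final$ asks for $f\in\Aut A$ that fixes setwise every indecomposable summand of $A$ contained in $H$ and such that every indecomposable summand $A_\eps\subseteq M$ is covered by some $f(A_{\eps'})$ with $A_{\eps'}\subseteq G$ indecomposable (precisely $f(A_{\eps'})\subseteq A_{\eps'}\oplus A_\eps$, $f(A_{\eps'})\neq A_{\eps'}$, $f(A_{\eps'})\neq A_\eps$). I would show such an $f$ exists iff $r(M)\leq\rho_{k_1-1}$. For ``if'': pick $r(M)$ pairwise disjoint cyclic summands inside the basic subgroup of $G$ (possible as it has rank $\rho_{k_1-1}\geq r(M)$), index them by the at most $r(M)$ homogeneous lines of $M[p]$, send the summand indexed by a line $L$ onto $L$, extend over $G$ by pure-injectivity of the bounded group $M$, call this $\nu$ (and set $\nu|_H=0$), and take $f=\id+\nu$; this unipotent automorphism fixes $H$ pointwise, and for each $A_\eps\subseteq M$ the summand $A_{\eps'}$ indexed by $\mathrm{socle}(A_\eps)$ has $f(A_{\eps'})$ ``diagonal'' over $A_{\eps'}\oplus A_\eps$ nontrivially covering $A_\eps$, so $\eps'\mapson f\eps$. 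For ``only if'': fixing every indecomposable summand of $H$ forces $f|_H$ to be scalar, so $f$ is block-triangular $\bigl(\begin{smallmatrix}a&0\\c&w\end{smallmatrix}\bigr)$ with $a\in\Aut G$, $w$ a unit and $c\colon G\to H$; the covering requirement then yields, for each cyclic summand $A_\eps\subseteq M$, an $a$-invariant cyclic summand $A_{\eps'}\subseteq G$ with $0\neq c(A_{\eps'})\subseteq A_\eps$, hence $c(A_{\eps'})\supseteq\mathrm{socle}(A_\eps)$; summing over all $\eps$ places $M[p]$ inside $c(G)$, and since a homomorphism from $G$ into a bounded group is, on the socles in play, governed by the $\rho_{k_1-1}$-dimensional space attached to the basic subgroup of $G$, this forces $r(M)=\dim M[p]\leq\rho_{k_1-1}$. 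Thus $Final\Pair[0]$ holds iff $r(M)\leq\rho_{k_1-1}$ for every level $k_1>k_0$ with $\kappa_{k_1}\neq0$.

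Third comes the cardinal bookkeeping. As $A$ is unbounded, infinitely many $\kappa_n$ are nonzero, so $\rho_n=\sum_{m>n}\kappa_m$ is a non-increasing sequence of infinite cardinals stabilising at $\mu_{fin}=\inf_n\rho_n$; let $N_0$ be least with $\rho_{N_0}=\mu_{fin}$, and observe that from $\rho_{N_0-1}=\kappa_{N_0}+\rho_{N_0}>\rho_{N_0}$ one gets $\kappa_{N_0}>\mu_{fin}$ when $N_0\geq1$. Using $\rho_{k_0-1}=\rho_{k_1-1}+r(M)$: if $k_0-1\geq N_0$ then $\rho_{k_0-1}=\rho_{k_1-1}=\mu_{fin}$ for all relevant $k_1>k_0$, so $r(M)\leq\mu_{fin}=\rho_{k_1-1}$ and $Final\Pair[0]$ holds; if $k_0-1<N_0$, choosing $k_1>N_0$ with $\kappa_{k_1}\neq0$ gives $\rho_{k_1-1}=\mu_{fin}<\kappa_{N_0}\leq\rho_{k_0-1}=\mu_{fin}+r(M)$, hence $r(M)=\rho_{k_0-1}>\rho_{k_1-1}$ and $Final\Pair[0]$ fails. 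Therefore $Final\Pair[0]$ holds exactly for the $ByOrd$-pairs of level $k_0\geq N_0+1$; for such a pair $A_1=A_{\overline{\Pair[0]}}$ is a maximal $p^{k_0-1}$-bounded summand with $k_0-1\geq N_0$, so $A_2=A_{\Pair[0]}$ has basic rank $\rho_{k_0-1}=\mu_{fin}$ and every indecomposable summand of order $>p^{k_0-1}$, which exceeds the order of every indecomposable summand of $A_1$ — exactly the decompositions described before the lemma (including, when $N_0=0$, the trivial one $A_1=0$, $A_2=A$); and conversely each such decomposition arises this way, taking $\eps_0$ of minimal order among the indecomposable summands of $A_2$. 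Since every constituent of $Final$ comes from \cite{My1} and is first-order over $\Aut A$, so is $Final$.

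The step I expect to be the main obstacle is the second one — in particular proving that no admissible $f$ exists once $r(M)>\rho_{k_1-1}$, i.e. that the obstruction is measured by the \emph{basic} rank $\rho_{k_1-1}$ of $G$ and not by the (possibly much larger) full rank of $G$; this is where purity, Theorem~\ref{BasicMaxBounded} and a careful analysis of homomorphisms from $G$ into the bounded group $M$, together with the fact that the indecomposable summands of $G$ are detected by its basic subgroup, must be combined. The rest — the explicit unipotent $f$ in the positive case, and the cardinal arithmetic of the third step (notably that a non-increasing sequence of cardinals dropping below $\mu_{fin}$ before index $N_0$ must do so with a jump $\kappa_{N_0}>\mu_{fin}$) — is routine once Step~2 is secured.
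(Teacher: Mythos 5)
Your proposal is essentially the paper's own argument, elaborated: the paper's proof consists precisely of the three assertions you prove in detail — that $ByOrd$ picks out the order-decompositions (maximal bounded complement), that the clause with $f$ expresses $r\bigl(A_{\Pair[0]}\cap A_{\overline{\Pair[1]}}\bigr)\leq r\bigl(B(A_{\Pair[1]})\bigr)$, and that this cardinal condition holds exactly when the basic subgroup of $A_{\Pair[0]}$ has rank $\mu_{fin}$ — so you are on the same route, just supplying the verifications the paper leaves implicit. The single step you flag as open (the ``only if'' bound in your Step 2) closes by the standard remark that any homomorphism $c\colon G\to H$ into a bounded group satisfies $c(G)=c(B(G))$, because $G/B(G)$ is divisible while a bounded group has no nonzero divisible subgroups, whence $M[p]\subseteq c(B(G))$ gives $r(M)\leq |B(G)|=\rho_{k_1-1}$; with that inserted your proof is complete and matches the paper's (much briefer) one.
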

\begin{proof}
The formula~$ByOrd$ specifies such decompositions~$A = A_{\overline \Pair} \oplus A_{\Pair}$ that the order of any indecomposable subgroup of the group~$A_{\overline \Pair}$ is less than the order of any indecomposable subgroup of the group~$A_{\Pair}$. These decompositions will be referred as order-decompositions. The~formula~$Final$ states that, first, the decomposition~$A = A_{\overline{\Pair[0]}} \oplus A_{\Pair[0]}$ is an order-decomposition and, second, for any order-decomposition~$A_{\Pair[0]} = A_{\Pair[1]} \oplus A_{\Pair[0] \cap \overline{\Pair[1]}}$ the rank of the group~$A_{\Pair[0] \cap \overline{\Pair[1]}}$ is less than or equal to the rank of the basic subgroup of the group~$A_{\Pair[1]}$. The latter means that the rank of the basic subgroup of~$A_{\Pair[0]}$ equals to the final rank~$\mu_{fin}$.
\end{proof}

Fix the pair~$\Pair[0]$, and let~$A_{low} = A_{\overline{\Pair[0]}}$, $A_{fin} = A_{\Pair[0]}$.

\begin{lemma}\label{Lemma2}
For an unbounded reduced Abelian $p$-group~$A$, there exists an automorphism~$\nu$ such that~$\nu\,\big|_{A_{low}} = \id$ and, for some basic subgroup~$B$, $\im \left(\nu - \id\,\big|_{A_{fin}}\right) = B$, i.\,e. for any~$b \in B$ there exists~$a \in A_{fin}$ such that~$\nu(a) = a + b$ and, vice versa,  for any~$a \in A_{fin}$, $\nu(a) = a + b$ for some~$b \in B$.
\end{lemma}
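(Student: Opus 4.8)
The plan is to construct the automorphism $\nu$ explicitly on a suitable decomposition of $A_{fin}$ induced by a basic subgroup, and then read off the image of $\nu - \id$. First I would invoke Theorem~\ref{BasicEndIm} (or directly the structure theory of basic subgroups) to fix a basic subgroup $B$ of $A_{fin}$ of the form $B = \bigoplus_{n\geq 1} B_n$ with $B_n$ a direct sum of copies of $\mathbb Z(p^n)$; since $A$ is unbounded and $A_{fin}$ carries the final rank, $B$ is unbounded, so infinitely many $B_n$ are nonzero. The key point is that although $B$ need not be a direct summand of $A_{fin}$, it is \emph{pure} and $A_{fin}/B$ is divisible, which is exactly the leverage needed to lift elements.

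Next I would build $\nu$ summand-by-summand. For a cyclic summand $\langle c\rangle\cong\mathbb Z(p^n)$ occurring in some $B_n$, I want an element $a\in A_{fin}$ with $\nu(a)=a+c$; the natural choice is to take $a$ of order $p^{n+1}$ with $p a$ related to $c$, using that $c\in p^{n}A_{fin}$ is impossible but $c$ does have ``height-zero'' behaviour controlled by purity — more precisely I would use Theorem~\ref{BasicMaxBounded}: $B_1\oplus\dots\oplus B_n$ is a maximal $p^n$-bounded direct summand, so one can choose, compatibly with a decomposition $A_{fin}=(B_1\oplus\dots\oplus B_n)\oplus C_n$, generators of higher-order cyclic pieces of $C_n$ (or of $B_{n+1}$) whose $p$-th multiples hit the generators of $B_n$ up to an element of lower order, and define $\nu$ to send each such generator $a$ to $a+b$ with $b$ the corresponding basis element of $B$. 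Extending by the identity on $A_{low}$ and checking that the assignment respects the relations (orders of generators) shows $\nu$ is a well-defined endomorphism; it is an automorphism because $\nu-\id$ is ``locally nilpotent'' in the appropriate sense — on each bounded summand $\nu=\id+\text{(nilpotent)}$, so $\nu$ is invertible there, and these invertibilities patch together.

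Finally I would verify the two inclusions describing $\im(\nu-\id\big|_{A_{fin}})$. By construction $\nu(a)-a$ runs exactly over the chosen basis elements of $B$ as $a$ ranges over the chosen generators, and $\mathbb Z$-linearity together with the relation-checking above forces $\im(\nu-\id)$ to be precisely the subgroup they generate, namely $B$; conversely every $a\in A_{fin}$ decomposes along the chosen generators plus a kernel part, so $\nu(a)-a\in B$. I expect the main obstacle to be the well-definedness and invertibility of $\nu$: one must choose the lifted generators $a$ coherently across all the (infinitely many, possibly of unbounded order) cyclic pieces so that the orders match — i.e. that $b$ lies in $pA_{fin}\cap\langle a\rangle$ when needed — and this is exactly where purity of $B$ and the maximal-$p^n$-bounded-summand property of Theorem~\ref{BasicMaxBounded} must be used carefully rather than the naive ``send generator to generator plus generator'' recipe, which would fail to be an endomorphism.
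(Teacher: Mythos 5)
There is a genuine gap, and it sits exactly where the real content of the lemma lies. Your construction only ever defines $\nu$ on cyclic pieces: on $A_{low}$, on the chosen lifts $a$, and implicitly on the basis of $B$. But the lemma requires a single automorphism of the whole group such that $\nu(a)-a\in B$ for \emph{every} $a\in A_{fin}$, and $A_{fin}$ is in general not a direct sum of cyclic groups: $B\cap A_{fin}$ is pure but not a direct summand, and $A_{fin}/(B\cap A_{fin})$ is divisible and can be large. Your closing step, ``every $a\in A_{fin}$ decomposes along the chosen generators plus a kernel part,'' presupposes exactly such a decomposition and is false in the interesting case $B\neq A$ (you even flag this non-summand issue at the start, but the purity/divisibility ``leverage'' is never actually converted into a definition of $\nu$ on elements of infinite height, i.e.\ outside the subgroup generated by your chosen cyclic pieces). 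Specifying an additive, well-defined, order-respecting map there is precisely Szele's theorem in disguise: what one needs is an endomorphism of $A$ onto a basic subgroup, and that cannot be obtained by listing values on generators of $B$ and ``checking relations.'' A secondary issue: your requirement that $p a$ hit the generator $b$ of $B_n$ up to lower-order terms is neither needed (for $\nu(a)=a+b$ to respect orders one only needs $\ord(b)\le\ord(a)$) nor clearly achievable, so it adds difficulty without closing the gap.

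For comparison, the paper's proof uses Theorem~\ref{BasicEndIm} for its actual content, not merely to produce $B$: choose a basic subgroup $B$ with $A_{low}\subset B$, take an endomorphism $\eps\colon A\to B$ with $\eps\big|_{A_{low}}=\id$, $\im\big(\eps\big|_{A_{fin}}\big)=B\cap A_{fin}$ and $\ord(\eps(a))<\ord(a)$ on $A_{fin}$, and set $\nu\big|_{A_{low}}=\id$, $\nu\big|_{A_{fin}}=\id+\eps$. The order-decreasing property makes $\eps$ locally nilpotent, so $\id+\eps$ is invertible (your ``locally nilpotent'' remark is the right idea for this part), and the image condition is immediate from $\im\big(\eps\big|_{A_{fin}}\big)=B\cap A_{fin}$. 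To repair your argument, replace the summand-by-summand recipe by an appeal to that theorem (or reproduce its proof), since the global endomorphism onto $B$ is the one ingredient your proposal does not supply.
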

\begin{proof}
There exists a basic subgroup~$B$ such that~$A_{low} \subset B$. By Theorem~\ref{BasicEndIm}, there exists such endomorphism~$\eps\colon\,A \to B$ that~$\eps\,\big|_{A_{low}} = \id$ and~$\im \left(\eps\,\big|_{A_{fin}} \right) = B\cap A_{fin}$, and for all~$a \in A_{fin}$ $,\ord(\eps(a)) < \ord(a)$. We define the automorphism~$\nu$ on~$A_{low}$ and~$A_{fin}$ independently in the following way: $\nu\,\big|_{A_{low}} = \id$ and $\nu\,\big|_{A_{fin}} = \id + \eps$. Clearly, it is the required automorphism.
\end{proof}

We associate each automorphism~$\nu$ with a~subgroup~$B_\nu$ with the following formula. The formula indicates if the indecomposable subgroup for an extreme involution~$\eps$ lies in~$B_\nu$:
\begin{multline*}
    InBase(\eps, \nu) \define
        \exists\,\eps_{low}, \eps_{fin} \Big(
            Extreme(\eps_{low}) \land
            Extreme(\eps_{fin}) \land
            \eps \in \eps_{low} \oplus \eps_{fin} \land\\
            \land
            \eps_{low} \subset A_{low} \land
            \eps_{fin} \subset A_{fin} \land
            \exists\,\eps'\subset A_{fin} \big(
                \eps' \mapson{\nu} \eps_{fin} \land
                \ord(\eps') > \ord(\eps_{fin})
            \big)
        \Big)
\end{multline*}

\begin{lemma}\label{Lemma3}
For the automorphism~$\nu$ defined in Lemma~\ref{Lemma2}, the corresponding subgroup~$B_\nu$ coincide with the original basic subgroup~$B$.
\end{lemma}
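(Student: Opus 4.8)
The plan is to unwind the definition of the formula $InBase(\eps,\nu)$ when $\nu$ is the automorphism of Lemma~\ref{Lemma2}, and to show that the indecomposable summand $A_\eps$ for an extreme involution $\eps$ satisfies $InBase(\eps,\nu)$ precisely when $A_\eps \subseteq B$. Since the original basic subgroup $B$ contains $A_{low}$ and, by construction, $\nu|_{A_{low}} = \id$ while $\nu|_{A_{fin}} = \id + \eps$ with $\im(\eps|_{A_{fin}}) = B \cap A_{fin}$, the subgroup $B$ decomposes as $B = A_{low} \oplus (B \cap A_{fin})$. The formula asks for a splitting of $A_\eps$ along $A = A_{low}\oplus A_{fin}$ into pieces $A_{\eps_{low}} \subseteq A_{low}$, $A_{\eps_{fin}} \subseteq A_{fin}$ with $\eps \in \eps_{low}\oplus\eps_{fin}$, together with the extra demand that $A_{\eps_{fin}}$ be of the form $\nu(A_{\eps'})$ with $A_{\eps'} \subseteq A_{fin}$ of strictly larger order.

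First I would treat the part of $\eps$ lying in $A_{low}$: since $A_{low}\subseteq B$ automatically, the only real content is in the $A_{fin}$-component. So I reduce to showing: for an extreme involution $\eps_{fin}$ with $A_{\eps_{fin}}\subseteq A_{fin}$, the condition ``$\exists\,\eps' \subseteq A_{fin}$ with $\eps' \mapson{\nu}\eps_{fin}$ and $\ord(\eps') > \ord(\eps_{fin})$'' holds if and only if $A_{\eps_{fin}} \subseteq B \cap A_{fin}$. For the forward direction, if such an $\eps'$ exists then, by the meaning of $\mapson{\nu}$ (clause 8 of the formula list), $\nu(A_{\eps'}) \subseteq A_{\eps'}\oplus A_{\eps_{fin}}$ with $\nu(A_{\eps'})$ distinct from both summands; writing a generator $a$ of the cyclic group $A_{\eps'}$, we get $\nu(a) = a + b$ for a nonzero $b$ whose order relations force $b$ to generate $A_{\eps_{fin}}$ and force $b \in \im(\nu - \id) = \im(\eps|_{A_{fin}}) = B\cap A_{fin}$; hence $A_{\eps_{fin}} = \langle b\rangle \subseteq B$. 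For the converse, given a cyclic summand $A_{\eps_{fin}} = \langle b\rangle$ of $A_{fin}$ contained in $B\cap A_{fin} = \im(\eps|_{A_{fin}})$, pick $a\in A_{fin}$ with $\eps(a) = b$; since $\ord(\eps(a)) < \ord(a)$ the element $a$ generates a cyclic summand of strictly larger order whose image under $\nu = \id + \eps$ is $a + b$, and one checks this exhibits an extreme involution $\eps'$ with $\eps'\mapson{\nu}\eps_{fin}$, so $InBase$ holds.

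Combining the two components gives $A_\eps \subseteq B_\nu \iff A_\eps \subseteq B$ for every extreme involution $\eps$, and since an (unbounded reduced) Abelian $p$-group is determined by its indecomposable cyclic summands — equivalently, since two pure fully-characterized subgroups agreeing on all indecomposable summands coincide — we conclude $B_\nu = B$. I expect the main obstacle to be the careful bookkeeping of orders in the $\mapson{\nu}$ clause: one must verify that the strict inequality $\ord(\eps') > \ord(\eps_{fin})$ together with $\nu(A_{\eps'}) \subseteq A_{\eps'}\oplus A_{\eps_{fin}}$ really does pin down $b = \nu(a) - a$ as a \emph{generator} of $A_{\eps_{fin}}$ (rather than a proper multiple), and that no spurious $\eps'$ outside the range of $\eps$ can satisfy the formula — this is exactly where the property $\ord(\eps(a)) < \ord(a)$ built into Lemma~\ref{Lemma2} is used, and it is the one place where a genuine (if short) computation with cyclic $p$-groups is unavoidable.
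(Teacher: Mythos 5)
Your proposal follows essentially the same route as the paper: unwind $InBase(\eps,\nu)$, dispose of the $A_{low}$-component using $A_{low}\subset B$, and in both directions exploit the special form $\nu\,\big|_{A_{fin}}=\id+\eps$ with $\ord(\eps(a))<\ord(a)$ and $\im\left(\eps\,\big|_{A_{fin}}\right)=B\cap A_{fin}$, taking for the converse a preimage $a$ of $b$ of larger order as the summand giving $\eps'$. The computation you flag at the end (that $\nu(a)-a$ really lands on a generator of $A_{\eps_{fin}}$) is exactly what the paper's proof carries out: it writes $\nu(a)=ka+lb$, gets $k=1$ from the construction of $\nu$, gets $l$ prime to $p$ from $\langle a\rangle\oplus\langle ka+lb\rangle=\langle a\rangle\oplus\langle b\rangle$, and then passes to a multiple $ma$ with $\nu(ma)=ma+b$ to conclude $b\in B$.
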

\begin{proof}
Let $A_\eps$ lie in~$B$. Then~$A_\eps$ lies in the direct sum~$A_{\eps_{low}} \oplus A_{\eps_{fin}}$, where $A_{\eps_{low}} \subset B \cap A_{low} =: B_{low}$ and $A_{\eps_{fin}} \subset B \cap A_{fin} =: B_{fin}$. Let $A_{\eps_{fin}} = \langle b \rangle$. Then there exists an element~$a \in A_{fin}$ with greater order than $b$ such that $\nu(a) = a + b$. Then the extreme involution corresponding to the indecomposable subgroup~$\langle a \rangle$ can be chosen as~$\eps'$ since $\langle a \rangle \oplus \nu(\langle a \rangle) = \langle a \rangle \oplus A_{\eps_{fin}}$.

In reverse, let the formula~$InBase$ hold for an~extreme involution~$\eps$. Then  $A_{\eps_{low}}$ clearly lies in~$B$. Consider~$A_{\eps_{fin}} = \langle b \rangle$. Let $A_{\eps'} = \langle a \rangle$, $\nu(a) = ka + lb$. Be the construction of~$\nu$, $k = 1$. Since $\langle a \rangle \oplus \langle ka + lb \rangle = \langle a \rangle \oplus \langle b \rangle$, $l{\not\vdots} p$. Then for some~$m$, $\nu(ma) = ma + b$, i.\,e. $b \in B$. The statement is proved.
\end{proof}

Now we need to write the requirement on~$\nu$ stating that~$B_\nu$ is basic. We introduce some formulas.

1. The formula
\[
    Rest_\eps(\xi_1, \eps_1) \define
        \forall \eps_2 \big(
            \eps_2 \in (\xi_1, \eps_1) \then
                ord(\eps_2) \leq ord(\eps)
        \big)
\]
selects involution pairs~$(\xi_1, \eps_1)$ which correspond to direct sums of cyclic groups with order at most~$ord(A_\eps)$.

2. The formula
\begin{multline*}
    MaxRest_{\eps}(\xi_1, \eps_1) \define
        Rest_\eps(\xi_1, \eps_1)\land
        \forall (\xi_2, \eps_2) \big(
            Rest_\eps(\xi_2, \eps_2) \then
                (\xi_1, \eps_1) \not \subset (\xi_2, \eps_2)
        \big)
\end{multline*}
selects involution pairs~$(\xi_1, \eps_1)$ which correspond to maximal direct sums of cyclic groups with order at most~$ord(A_\eps)$.

\begin{lemma}\label{Lemma4}
For an automorphism~$\nu$, the formula
\begin{multline*}
IsBase(\nu) \define \forall \eps_0\ Extreme(\eps_0) \then \forall (\xi, \eps) \Big( \forall \eps' \big(\eps' \subset (\xi, \eps) \iff\\
\iff \ord(\eps') \leq \ord(\eps_0) \land InBase(\eps', \nu)\big) \then MaxRest_{\eps_0}(\xi, \eps)\Big)
\end{multline*}
is true if and only if the subgroup~$B_\nu$ is basic.
\end{lemma}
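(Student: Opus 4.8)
The plan is to read $IsBase(\nu)$ as a first-order transcription of the criterion of Theorem~\ref{BasicMaxBounded}: writing $B_\nu=\bigoplus_n B_n$ with $B_n$ a direct sum of copies of $\mathbb Z(p^n)$, the subgroup $B_\nu$ is basic in $A$ if and only if for every $n$ the subgroup $B_1\oplus\dots\oplus B_n$ is a maximal $p^n$-bounded direct summand of $A$. So the task is to show that the sentence $IsBase(\nu)$ says exactly this about $B_\nu$.

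First I would decode the matrix of the formula for a fixed extreme involution $\eps_0$ with $\ord(A_{\eps_0})=p^n$. Since $A_{(\xi,\eps)}$ is always a nonzero direct summand of $A$, and every nonzero direct summand of the reduced $p$-group $A$ contains a cyclic (indecomposable) direct summand of $A$, a pair $(\xi,\eps)$ satisfies the inner hypothesis $\forall\eps'\,(\eps'\subset(\xi,\eps)\iff\ord(\eps')\leq\ord(\eps_0)\wedge InBase(\eps',\nu))$ exactly when the cyclic direct summands of $A$ lying inside $A_{(\xi,\eps)}$ are precisely those of order at most $p^n$ lying in $B_\nu$. Such an $A_{(\xi,\eps)}$ is automatically $p^n$-bounded: a direct summand of $A$ that is not $p^n$-bounded has a basic subgroup that is not $p^n$-bounded (otherwise that basic subgroup, being bounded and pure, would be a direct summand whose reduced $p$-divisible complement is $0$, so the summand would equal its own basic subgroup), hence a cyclic summand of order $>p^n$, which is then a cyclic direct summand of $A$ of order $>p^n$ inside $A_{(\xi,\eps)}$ — contradicting the hypothesis. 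A $p^n$-bounded direct summand of $A$ is the sum of the cyclic direct summands of $A$ contained in it, so the hypothesis pins $A_{(\xi,\eps)}$ down uniquely as the subgroup $C_n$ generated by all cyclic direct summands of $A$ of order $\leq p^n$ lying in $B_\nu$, and is satisfiable at all only when $C_n$ is a nonzero direct summand of $A$. Finally $MaxRest_{\eps_0}(\xi,\eps)$ holds iff $A_{(\xi,\eps)}$ is a maximal $p^n$-bounded direct summand of $A$ (a bounded $p$-group is a direct sum of cyclic groups, so among direct summands of $A$ the properties ``maximal direct sum of cyclics of order $\leq p^n$'' and ``maximal $p^n$-bounded direct summand'' coincide). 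Hence $IsBase(\nu)$ is equivalent to: for every $n$, if $C_n$ is a nonzero direct summand of $A$, then $C_n$ is a maximal $p^n$-bounded direct summand of $A$.

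It remains to identify this with ``$B_\nu$ is basic''. For ``$B_\nu$ basic $\Rightarrow IsBase(\nu)$'' I would show that $C_n$ is never a nonzero direct summand of $A$. Indeed $A$, hence $B_\nu=\bigoplus B_n$, is unbounded; if $C_n\ne 0$, pick some $j\leq n$ with $B_j\ne 0$ and some $m>n$, together with generators $g_j,g_m$ of cyclic summands $\mathbb Z(p^j)\subseteq B_j$ and $\mathbb Z(p^m)\subseteq B_m$. Then $\langle g_j+p^{m-j}g_m\rangle$ is a cyclic direct summand of $A$ of order $p^j\leq p^n$ lying in $B_\nu$ (purity is checked level by level, using that $g_j,g_m$ have height $0$ in their summands), so $p^{m-j}g_m=(g_j+p^{m-j}g_m)-g_j$ belongs to $C_n$; this element has height $m-j$ in $A$ (by purity of $B_\nu$) but only height at most $n-j<m-j$ in the $p^n$-bounded group $C_n$, so $C_n$ is not pure in $A$ and hence not a direct summand — and the implication in $IsBase(\nu)$ holds vacuously for every $n$ (when every $B_j$ with $j\leq n$ vanishes one has $C_n=0$, the same vacuous situation). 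For the converse ``$IsBase(\nu)\Rightarrow B_\nu$ basic'' I would argue contrapositively: if $B_\nu$ is not basic, I would use Theorem~\ref{BasicMaxBounded} together with the explicit shape of $B_\nu$ coming from the construction of $InBase$ in Lemmas~\ref{Lemma2}--\ref{Lemma3} to locate an $n$ for which $C_n$ is a nonzero direct summand of $A$ that is strictly contained in a larger $p^n$-bounded direct summand of $A$; the pair $(\xi,\eps)$ with $A_{(\xi,\eps)}=C_n$, together with any $\eps_0$ of order $p^n$, then refutes $IsBase(\nu)$.

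The step I expect to be the main obstacle is this last one: converting the purely ``internal'' failure of the basic-subgroup axioms for $B_\nu$ into an honest direct summand $C_n$ of $A$ that is provably non-maximal and has exactly the prescribed set of cyclic direct summands. One cannot treat $B_\nu$ as an arbitrary subgroup here — the argument has to exploit the restricted shape of the subgroup singled out by $InBase$ rather than a generic $B_\nu$. This is exactly where the hypothesis $p\geq 3$ and the involution formulas of~\cite{My1} enter: they guarantee both that $C_n$ behaves as expected and that the witnessing pair $(\xi,\eps)$ is first-order definable from $\nu$.
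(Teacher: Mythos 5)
There is a genuine gap, and it sits exactly where you flag it. The direction ``$IsBase(\nu)\Rightarrow B_\nu$ basic'' is the actual content of the lemma, and your proposal does not prove it: you only say you ``would locate an $n$ for which $C_n$ is a nonzero direct summand \dots strictly contained in a larger $p^n$-bounded summand,'' and then appeal vaguely to $p\geq 3$ and the involution formulas of~\cite{My1} to make ``$C_n$ behave as expected.'' No such argument is given, and in fact your own forward-direction analysis shows it cannot be given under your reading of the formula: the skew-summand construction $\langle g_j+p^{m-j}g_m\rangle$, which you use to show that $C_n$ is never pure (hence never a direct summand) when $B_\nu$ is basic, uses only that $B_\nu$ contains cyclic direct summands of $A$ of some order $\leq p^n$ and of some order $>p^n$ — it does not use basicness at all. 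So under your interpretation the inner hypothesis is unsatisfiable, and $IsBase(\nu)$ is vacuously true, for essentially every unbounded $B_\nu$ carved out by $InBase$, basic or not; the biconditional of the lemma would then be false as you read it. A proof that establishes one direction by vacuity and leaves the other as ``the main obstacle'' has not proved the lemma.

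The discrepancy comes from how the condition $\forall\eps'\,(\eps'\subset(\xi,\eps)\iff\ord(\eps')\leq\ord(\eps_0)\land InBase(\eps',\nu))$ is decoded. You take it to force $A_{(\xi,\eps)}$ to contain \emph{every} cyclic direct summand of $A$ of order $\leq\ord(\eps_0)$ lying in $B_\nu$, including skew ones such as $\langle g_j+p^{m-j}g_m\rangle$; the paper reads it as singling out the ``limitation'' of $B_\nu$ at order $\ord(\eps_0)$, i.e.\ the truncation $B_1\oplus\dots\oplus B_n$ of the fixed decomposition of $B_\nu$ determined by the summands $A_{\eps'}$ with $InBase(\eps',\nu)$ (note that $InBase$ is a concrete formula about $\nu$, proved in Lemma~\ref{Lemma3} to generate $B$, not the literal condition $A_{\eps'}\subseteq B_\nu$). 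With that reading, $MaxRest_{\eps_0}(\xi,\eps)$ says precisely that this truncation is a maximal $\ord(\eps_0)$-bounded direct summand of $A$, and the lemma is then immediate from Theorem~\ref{BasicMaxBounded} — that is the paper's whole proof. Your skew-summand observation is a worthwhile point about the robustness of the formula's decoding, but as a proof strategy it leads away from the intended criterion rather than toward it, and it leaves the essential direction unproven.
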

\begin{proof}
The requirement~$IsBase(\nu)$ means that each limitation of the subgroup~$B_\nu$ with the order~$\ord(\eps_0)$ is a maximal $\ord(\eps_0)$-bounded summand of the group~$A$. The statement of the lemma is implied from Theorem~\ref{BasicMaxBounded}.
\end{proof}

\section{Specifying definable sets in basic subgroup}

In the paper~\cite{My1}, a variant of Shelah theorem (\cite{Shelah}) was proved for the case when  $\Omega$ is the set of automorphism tuples encoding endomorphisms of the group~$A=\bigoplus\limits_\mu \Z{p^l}\ (l \in \mathbb N)$.

\begin{theorem}\label{ShelahOriginal}
There exists a formula~$\widetilde \varphi(\dots)$ satisfying the following statement.
Let $\{ f_i\}_{i\in \mu}$ be a set of elements from $\Omega$. Then there exists a vector $\overline g$ such that the formula~$\widetilde
\varphi( f,\overline g)$
is true in $\Omega$ if and only if $f=f_i$ for some~$i\in \mu$.
\end{theorem}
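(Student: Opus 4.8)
Here is my plan for proving the statement.

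\begin{proof}[Sketch of proof]
The theorem asserts, in effect, that $\Omega$ interprets second-order logic over a set of cardinality~$\mu$, and this is the combinatorial engine behind the whole transfer from first-order properties of $\Aut A$ to second-order properties of $A$. I may assume $\mu$ is infinite, so $|A|=\mu$ while $|\Aut A|=2^\mu$; thus a set $\{f_i\}_{i\in\mu}$ is ``small'' inside $\Omega$, and the goal is to produce a \emph{single} formula $\widetilde\varphi(\bar x,\bar y)$ such that every subset of $\Omega$ of cardinality at most~$\mu$ has the form $\{\bar x\in\Omega\mid\Omega\models\widetilde\varphi(\bar x,\bar g)\}$ for a suitable parameter tuple~$\bar g$. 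Since $\Omega$ is a fixed parameter-free-definable set of $k$-tuples of automorphisms inside $\Aut A$, it suffices to code, uniformly by a finite parameter, an arbitrary subset $S\subseteq(\Aut A)^k$ with $|S|\le\mu$, and then read off $\{f_i\mid i\in\mu\}$ from the case $S=\{f_i\mid i\in\mu\}$.

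The first move is to fix, once and for all, a decomposition $A=\bigoplus_{i\in\mu}A_i$ in which every summand $A_i$ is isomorphic to $A$ itself (possible because $A=\bigoplus_\mu\Z{p^l}$ and $\mu\cdot\mu=\mu$), together with isomorphisms $\theta_i\colon A_i\to A$. The crucial and hardest step is to show that there is a \emph{finite} ``structural'' parameter tuple $\bar g_0$ over which the following become first-order definable in $\Aut A$: a definable set $I$ of cardinality~$\mu$, definably in bijection with the family $\{A_i\}_{i\in\mu}$; and a definable ternary relation which, for a block-diagonal automorphism $g=\bigoplus_i g_i$ with $g_i\in\Aut A_i$, an index $i\in I$, and $f\in\Aut A$, expresses $\theta_i\circ g_i\circ\theta_i^{-1}=f$. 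In other words, finitely many parameters must yield genuine first-order access to the whole $\mu$-indexed block system of $A$ together with the ``evaluate the $i$-th block'' operation; quantifying over $i\in I$ then plays the role of second-order quantification over~$\mu$. Granting this, given $S\subseteq(\Aut A)^k$ with $|S|\le\mu$ I choose a surjection $\rho\colon\mu\twoheadrightarrow S$, write $\rho(i)=(\rho(i)_1,\dots,\rho(i)_k)$, and set $g_S:=\bigl(\bigoplus_i\theta_i^{-1}\rho(i)_1\theta_i,\dots,\bigoplus_i\theta_i^{-1}\rho(i)_k\theta_i\bigr)$, a $k$-tuple of block-diagonal automorphisms of~$A$. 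Then I define
\[
   \widetilde\varphi(f_1,\dots,f_k;\,\bar g_0,g_S)\ \define\ \exists\,i\in I\ \bigwedge_{j=1}^{k}\bigl(\theta_i\circ(g_S)_j|_{A_i}\circ\theta_i^{-1}=f_j\bigr),
\]
spelled out through the definable relation above (and, harmlessly, with a conjunct $\bar x\in\Omega$). By construction $(g_S)_j|_{A_i}=\theta_i^{-1}\rho(i)_j\theta_i$, so $\{\bar f\mid\widetilde\varphi(\bar f;\bar g_0,g_S)\}=\rho(\mu)=S$; taking $\bar g=(\bar g_0,g_S)$ and $S=\{f_i\mid i\in\mu\}$ gives exactly what is wanted. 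Note that $\bar g_0$ does not depend on $S$, so $\widetilde\varphi$ is indeed a single fixed formula.

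Everything after the structural step is bookkeeping: checking that the components of $g_S$ are honest automorphisms, that $\widetilde\varphi$ is first-order over $\Omega$, and — importantly — that the block projections, the isomorphisms $\theta_i$, the coding tuple $g_S$ and the parameters $\bar g_0$ are all elements of, or coded by tuples in, $\Omega$, so the argument never leaves $\Omega$ for the ambient group; this is the only place the precise shape of $\Omega$ as ``tuples encoding endomorphisms'' enters. The genuine obstacle is the structural claim: obtaining, from finitely many parameters, a uniformly definable copy of~$\mu$ together with the block-evaluation relation. This is the Shelah-type bootstrapping, which I would carry out in the manner of~\cite{Shelah}: name a small number of endomorphisms of~$A$ (each coded by an automorphism tuple in $\Omega$) whose joint behaviour pins down the coordinate system $A=\bigoplus_i A_i$ with the $\theta_i$ and realises a wreath product $\Aut(A)\wr\mathrm{Sym}(\mu)$ definably inside $\Aut A$, from which $I$ and the block-evaluation relation are recovered. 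The adaptation of that construction from vector spaces and free modules to the bounded $p$-group $\bigoplus_\mu\Z{p^l}$ is precisely what is done in~\cite{My1}, from which the present theorem is essentially imported.
\end{proof}
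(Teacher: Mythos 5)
The paper does not actually prove this theorem: it is quoted as a result already established in~\cite{My1}, where a variant of Shelah's interpretation theorem~\cite{Shelah} is worked out for $\Omega$ the set of automorphism tuples encoding endomorphisms of $A=\bigoplus_\mu \Z{p^l}$. Your sketch follows the same general route as that source --- split $A$ into $\mu$ summands isomorphic to $A$, code an arbitrary $\mu$-indexed family by block-diagonal automorphisms, and let $\widetilde\varphi(f,\overline g)$ assert that $f$ coincides with some block of the parameter --- so in spirit your plan and the cited construction agree, and your final sentence correctly identifies the theorem as imported.

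As a standalone proof, however, the gap sits exactly where the theorem lives: the ``structural step'' (producing a finite parameter tuple $\bar g_0$ over which an index set $I$ of cardinality $\mu$ and the block-evaluation relation $\theta_i\circ g\big|_{A_i}\circ\theta_i^{-1}=f$ become first-order definable in $\Aut A$) is the entire content of Shelah's method, and you defer it wholesale to~\cite{Shelah} and~\cite{My1}. In this setting the step is genuinely delicate, because elements of $A$ are not available in the language of $\Aut A$: in~\cite{My1} the blocks, the identifications between blocks, and the evaluation relation are all expressed through extreme involutions and formulas of the type $\eps_1 \mapson{f} \eps_2$ recalled in the Background section, and none of that machinery is reconstructed in your sketch (your closing remark that everything ``is coded by tuples in $\Omega$'' is an assertion, not an argument). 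Since the paper itself imports the theorem without proof, your treatment is consistent with the paper's, but it should be presented as citation plus motivation rather than as a proof; anyone wanting the actual argument must go to~\cite{My1}.
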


We need to interpret mappings of a set of extreme involutions from the basic subgroup~$B$ into itself in order to use Shelah theorem for the case of indecomposable direct summands of~$B$. For this purpose, accordingly to the previous section, we construct for a mapping~$f$ two automorphisms $f_1$~and~$f_2$, which correspond to~$B$, and define
$$
    f(A_{\eps_1}) = A_{\eps_2} \define
        \exists \eps \big(
            Extreme(\eps) \land
            \eps \mapson{f_1} \eps_1 \land
            \eps \mapson{f_2} \eps_2
        \big).
$$

A composition of such mappings can be easily expressed with the latter formula.

Hence we get Shelah Theorem in the following formulation.

\begin{theorem}\label{Shelah}
Let $\Omega$ be the set of extreme involutions corresponding to dircet summands from~$B$.
There exists a formula~$\widetilde \varphi(\dots)$ satisfying the following statement.
Let $\{ f_i\}_{i\in \mu}$ be a~set of elements from $\Omega$. Then there exists a vector $\overline g$ such that the formula~$\widetilde \varphi( f,\overline g)$ is true in $\Omega$ if and only if $f=f_i$ for some~$i\in \mu$.
\end{theorem}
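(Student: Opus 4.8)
The plan is to deduce Theorem~\ref{Shelah} from Theorem~\ref{ShelahOriginal}, transporting the coding formula along the interpretation, described above, of mappings between the indecomposable direct summands of the basic subgroup~$B$.

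First I would fix the set-up. Write $B = \bigoplus_{n\geq 1} B_n$ with~$B_n$ homocyclic of exponent~$p^n$; the indecomposable direct summands of~$B$ are the cyclic subgroups occurring in the~$B_n$, and these are exactly the subgroups~$A_\eps$ for $\eps\in\Omega$, so $|\Omega| = |B| = \mu$. The assertion then reduces to this: for every family $\{f_i\}_{i\in\mu}$ of elements of~$\Omega$ there is a tuple of automorphisms~$\overline g$ such that $\widetilde\varphi(f,\overline g)$ holds in~$\Aut A$ precisely when $f = f_i$ for some~$i\in\mu$.

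Next I would collect the interpreting machinery. By Lemmas~\ref{Lemma2}--\ref{Lemma4} there is an automorphism~$\nu$ with $IsBase(\nu)$ and $B_\nu = B$; and for any two automorphisms~$f_1,f_2$ corresponding to~$B$, the formula $f(A_{\eps_1}) = A_{\eps_2}$ defines a partial map of~$\Omega$ into itself, while the composition of two such maps is again first-order expressible via the formulas $\eps \mapson{f_j}\eps_k$. Since~$B$ is unbounded, for any~$\eps_1,\eps_2\in\Omega$ of equal order one can choose such automorphisms realizing the partial map sending~$A_{\eps_1}$ to~$A_{\eps_2}$ and fixing the other summands of that order; in this way every finitely supported configuration of equal-order summands that Shelah's coding~\cite{Shelah} requires is realized. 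Thus inside~$\Aut A$ we interpret a structure~$\mathcal M$ consisting of~$\Omega$ together with all these maps and their composition. The proof of Theorem~\ref{ShelahOriginal} in~\cite{My1} uses only two properties of $\bigoplus_\mu\Z{p^l}$: that its set of indecomposable summands has cardinality~$\mu$, and that the encoded endomorphisms realize the finitely supported configurations on which the coding is built. Both hold for~$\Omega$ and~$\mathcal M$: the first because $|\Omega|=\mu$, the second because each such configuration lives, inside~$B$, on finitely many summands of a single order~$p^n$. Hence the argument of~\cite{My1} carries over with~$\Omega$ in place of the summand set of $\bigoplus_\mu\Z{p^l}$ and the maps of~$\mathcal M$ in place of the encoded endomorphisms, and it yields the formula~$\widetilde\varphi$; the tuple~$\overline g$ is the one supplied by Theorem~\ref{ShelahOriginal} applied to the image of $\{f_i\}_{i\in\mu}$ under this interpretation.

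The step I expect to be the main obstacle is checking that the order constraint on automorphisms of~$A$ — an automorphism carries an indecomposable summand of order~$p^n$ again to a summand of order~$p^n$, so the definable maps $f(A_{\eps_1}) = A_{\eps_2}$ preserve the order of summands — does not obstruct the coding. One must verify that every map needed in the proof of Theorem~\ref{ShelahOriginal} relates summands of equal order, so that it is captured by $f(A_{\eps_1}) = A_{\eps_2}$, and that wherever a passage between summands of different orders is unavoidable it can instead be routed through the formulas $\eps' \mapson{f}\eps$ at the cost of some routine bookkeeping with orders. Once this is in place, the translation back and forth along the interpretation is immediate, which completes the proof.
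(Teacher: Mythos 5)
Your proposal is correct and follows essentially the same route as the paper: the paper also obtains Theorem~\ref{Shelah} by interpreting maps of~$\Omega$ into itself through pairs of automorphisms corresponding to~$B$, with the formulas $\eps \mapson{f_1} \eps_1 \land \eps \mapson{f_2} \eps_2$ handling summands of different orders, and then transferring Theorem~\ref{ShelahOriginal} (the argument of~\cite{My1}) along this interpretation. The only slight inaccuracy is your claim that $|\Omega|=\mu$ (the rank of~$B$ need not equal the cardinality of~$A$), but this does not affect the transfer, since the coding only requires defining arbitrary subsets of~$\Omega$ by parameters.
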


\section{Structuring basic subgroup}

By Theorem~\ref{Shelah}, we define a set of extreme involutions that corresponds to decomposition of basic subgroups into indecomposable summands. This set must satisfy two conditions: first, involutions in it must be independent of each other, and second, any superset of extreme involutions must have dependent involutions. Denote this set by~$\F_B$.

Let $B = \bigoplus\limits_{i} B_i$, where $B_i$ are indecomposable summands. We are to define a set of automorphisms~$g_{ij}$ with $\ord(B_i) \geq \ord(B_j)$ which specify generators~$b_i$ of these indecomposable summands. Precisely, $B_i = \langle b_i \rangle$ for each~$i$, $g_{ij} \Big|_{\bigoplus\limits_{m \ne i} B_m} = \id$ and $g_{ij}(b_i) = b_i + b_j$ for each~$i, j$.

We need the following technical lemma.

\begin{lemma}\label{Lemma5}
Let $g$ be such an automorphism that $g \Big|_{\bigoplus\limits_{m \ne i} B_m} = \id$ and $g(b_i) = k_1 b_i + k_2 b_j$ for some $i, j, k_1 \ne 0, k_2 \ne 0$, where $B_i = \langle b_i \rangle$, $B_j = \langle b_j \rangle$. Then $k_1 = 1$ if and only if the constraint $g_0^{-1} g g_0(B_k) \subset B_k \oplus B_j$ is true for some $k$ and an~automorphism~$g_0$ with $g_0 \Big|_{\bigoplus\limits_{m \ne k} B_m} = \id$ and $g_0(b_k) = l_1 b_k + l_2 b_i$ for some $l_1 \ne 0, l_2 \ne 0$, where $B_k = \langle b_k \rangle$.
\end{lemma}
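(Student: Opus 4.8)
The plan is to analyze what the conjugated automorphism $g_0^{-1} g g_0$ does to the generator $b_k$, and read off from the resulting formula when the coefficient $k_1$ must equal $1$. First I would compute $g_0^{-1}$ explicitly: since $g_0$ fixes every $B_m$ with $m \ne k$ and sends $b_k \mapsto l_1 b_k + l_2 b_i$ with $l_1, l_2$ both nonzero modulo the relevant power of $p$, the map $g_0$ is invertible precisely when $l_1$ is a unit, and then $g_0^{-1}$ fixes all $B_m$ with $m \ne k$ and sends $b_k \mapsto l_1^{-1} b_k - l_1^{-1} l_2 b_i$ (with coefficients reduced modulo $\ord(b_k)$, using $\ord(b_i) \geq \ord(b_k)$ so the term $l_2 b_i$ makes sense after multiplying by $l_1^{-1}$). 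Then I would trace the image of $b_k$ under the composite $g_0^{-1} g g_0$: apply $g_0$ to get $l_1 b_k + l_2 b_i$; apply $g$, which fixes $b_k$ (as $k \ne i$, assuming $k \ne i$; the case $k = i$ is degenerate and handled separately) and sends $b_i \mapsto$ itself only if $i \notin \{$indices moved by $g\}$ — wait, $g$ moves $b_i$? No: $g$ fixes $\bigoplus_{m \ne i} B_m$ and sends $b_i \mapsto k_1 b_i + k_2 b_j$. So applying $g$ to $l_1 b_k + l_2 b_i$ gives $l_1 b_k + l_2(k_1 b_i + k_2 b_j) = l_1 b_k + l_2 k_1 b_i + l_2 k_2 b_j$; finally apply $g_0^{-1}$, which fixes $b_i$ and $b_j$ and sends $b_k \mapsto l_1^{-1} b_k - l_1^{-1} l_2 b_i$, yielding
\[
    g_0^{-1} g g_0(b_k) = l_1 \bigl(l_1^{-1} b_k - l_1^{-1} l_2 b_i\bigr) + l_2 k_1 b_i + l_2 k_2 b_j
        = b_k + l_2(k_1 - 1) b_i + l_2 k_2 b_j.
\]

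From this computation the lemma falls out: the containment $g_0^{-1} g g_0(B_k) \subset B_k \oplus B_j$ holds if and only if the coefficient of $b_i$ in $g_0^{-1} g g_0(b_k)$ vanishes modulo $\ord(b_k)$, i.e. $l_2(k_1 - 1) \equiv 0$. Since $l_2 \ne 0$ can be chosen to be a unit (we are free to pick $g_0$, and in the ``if'' direction we exhibit one; in the ``only if'' direction the hypothesis supplies some admissible $g_0$ and we must argue the coefficient vanishes for that one), and since $b_i$ has order at least that of $b_k$, the condition $l_2(k_1-1) b_i \in B_k$ — recalling $B_k \cap B_i = 0$ in the direct sum — forces $l_2(k_1 - 1) b_i = 0$; choosing $l_2$ a unit this gives $\ord(b_i) \mid (k_1 - 1)$, hence $(k_1 - 1) b_i = 0$, which is exactly the statement $k_1 = 1$ in $\mathrm{End}(B_i)$. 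Conversely if $k_1 = 1$ then the $b_i$-term is absent for every choice of $g_0$, so the containment holds (take, e.g., $l_1 = 1$, $l_2 = 1$, $k$ any index distinct from $i$ with $\ord(b_k) \leq \ord(b_i)$, which exists since $\ord(b_i) \geq \ord(b_j)$ and we may take $k = j$).

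I would also dispose of the boundary cases: if $k = i$ one checks directly that conjugation cannot separate the $b_j$-component in the required way unless $k_1$ behaves as claimed, and if no index $k \ne i$ with the order constraint exists then $B$ is essentially a single indecomposable summand above $B_j$, a situation that does not arise under the standing hypotheses. The main obstacle I anticipate is bookkeeping the coefficients modulo the correct orders: because the $B_i$ have different (finite, or infinite) orders, expressions like $l_1^{-1} l_2 b_i$ and $l_2 k_2 b_j$ only make sense after reducing modulo $\ord(b_i)$ and $\ord(b_j)$ respectively, and one must be careful that $\ord(b_i) \geq \ord(b_k)$ (which holds because $g_{ki}$ is only defined when $\ord(B_k) \geq \ord(B_i)$ — so in fact here the roles force $\ord(b_i) \le \ord(b_k)$; this sign of the inequality must be tracked precisely, as it is exactly what makes ``$l_2(k_1-1)b_i \in B_k \Rightarrow (k_1-1)b_i = 0$'' valid rather than vacuous). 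Once the order relations are pinned down correctly, the algebraic identity above does all the work and the equivalence is immediate.
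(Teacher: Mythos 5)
Your core argument is the same as the paper's: you conjugate and obtain the identity $g_0^{-1} g g_0(b_k) = b_k + l_2(k_1-1)b_i + l_2 k_2 b_j$, which is precisely the paper's one-line computation, and you read the equivalence off the $b_i$-coefficient. That part is fine; note that both you and the paper implicitly use $k \ne i$ (so that $g$ fixes $b_k$) and $k \ne j$ (so that $g_0^{-1}$ fixes $b_j$) in deriving this identity.

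However, the extra details you add do not go through as written. In the ``if'' direction you exhibit the witness $l_1 = l_2 = 1$ with ``any $k \ne i$ such that $\ord(b_k) \le \ord(b_i)$, e.g.\ $k = j$''. Taking $k = j$ destroys the displayed identity, since then $g_0^{-1}$ no longer fixes $b_j$: a direct computation with $k_1 = 1$ gives $g_0^{-1} g g_0(b_j) = (1 + l_1^{-1} l_2 k_2)\, b_j - l_1^{-1} l_2^{2} k_2\, b_i$, whose $b_i$-component is in general nonzero, so the containment fails for exactly the witness you propose; one must take $k \notin \{i, j\}$. The order inequality is also backwards: for $b_k \mapsto l_1 b_k + l_2 b_i$ with $l_2$ a unit to extend to an endomorphism one needs $\ord(b_i) \le \ord(b_k)$, not $\ord(b_k) \le \ord(b_i)$ (you state the correct direction only in your final parenthesis, after asserting the opposite earlier). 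Finally, in the ``only if'' direction your phrase ``choosing $l_2$ a unit'' is circular: there $g_0$ is supplied by the hypothesis, not chosen, and if $l_2$ is divisible by a high power of $p$ then $l_2(k_1-1)b_i = 0$ does not force $(k_1-1)b_i = 0$. To be fair, the paper's proof simply asserts the equivalence after the same computation and is silent on this point; but since you explicitly raised these issues, your resolutions of them need to be corrected along the lines above.
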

\begin{proof}
$$
    g_0^{-1} g g_0(b_k) =
    g_0^{-1} g(l_1 b_k + l_2 b_i) =
    g_0^{-1}(l_1 b_k + l_2 k_1 b_i + l_2 k_2 b_j) =
    b_k + l_2 (k_1 - 1) b_i + l_2 k_2 b_j.
$$
Hence, it is clear that $g_0^{-1} g g_0(B_k)$ is in $B_k \oplus B_j$ if and only if $k_1 = 1$.
\end{proof}

Now we specify constraints for the set of automorphisms.

\begin{enumerate}
\item
For each $i$ and $j$, $\ord(B_i) \geq \ord(B_j)$, there is exactly one automorphism~$g_{ij}$ in the set, which is identical on~$\bigoplus\limits_{m \ne i} B_m$ and maps $B_i$ into a subgroup of~$B_i \oplus B_j$ that is equal neither to $B_i$ nor to $B_j$. There must be no other automorphisms in the set.

\item
For each automorphism~$g_{ij}$ from the set, $g_{ij}(b_i) = b_i + k_{ij} b_j$ for some~$k_{ij}$, where $B_i = \langle b_i \rangle$, $B_j = \langle b_j \rangle$. This constraint can be expressed in a formula due to Lemma~\ref{Lemma5}.

\item
For any three automorphisms~$g_{ij}, g_{jk}, g_{ik}$ from this set, $g_{jk}^{-1} g_{ij}^{-1} g_{jk} g_{ij} = g_{ik}$. This constraint adjusts the coefficients~$k_{ij}$ with each other. Hence, it can be assumed that the coefficients~$k_{ij}$ are equal to~$1$ (by choosing the corresponding generators~$b_i$), i.\,e. $g_{ij}(b_i) = b_i + b_j$.
\end{enumerate}

We denote this set, which is provided by Theorem~\ref{Shelah}, by~$\F_g$.

\section{Interpretation of the first order logic of the group~$A$}

In this section, we express the first order logic of the group~$A$ in terms of the first order language of its automorphism group. For this purpose, it is sufficient to interpret each element of the group by some automorphism and to define the formulas for equality and addition of two elements. Then any statement in the first order language of the Abelian group~$A$ can be translated into an equivalent statement in the first order language of the automorphism group~$\Aut A$ by replacing all the quantifiers over elements of the group and all the predicates of equality and addition with the corresponding quantifiers over the interpreting automorphisms and the formulas for equality and addition. (For the details of the translation, see paper~\cite{My1}.)

Notice that each element of the group~$A$ has finite order. Thus, there exists a decomposable direct summand~$B_i = \langle b_i \rangle$ of the subgroup~$B$ which has greater order. Then there exists an automorphism~$f$ which is identical on the direct complement to~$B_i$ and which maps~$b_i$ to~$b_i + a$. It is the automorphism which encodes~$a$.

The formula for selecting such automorphisms~$f$ is the following:
$$
    \exists \eps_i \in \F_B\
        \exists \eps_a \Big(
            Extreme(\eps_a) \land
            \eps_i \mapson{f} \eps_a
        \Big) \land
        \forall \eps_j \in \F_B \Big(
            \eps_i \ne \eps_j \then
                f(\eps_j) = \eps_j
        \Big).
$$

Now here is the formula for equality of two such automorphisms~$f_1$ and~$f_2$:
\begin{multline*}
    f_1 \doteq f_2 \define
        \exists \eps_1, \eps_2 \in \F_B\
            \exists \eps_a \Big(
                Extreme(\eps_a) \land
                \eps_1 \mapson{f_1} \eps_a \land
                \eps_2 \mapson{f_2} \eps_a \land\\
                \land \exists g_{ij} \in \F_g \big(
                    \eps_1 \mapson{g_{ij}} \eps_2 \land
                    g_{ij}^{-1} f_2 g_{ij} = f_1
                \big)
            \Big).
\end{multline*}

Finally, here is the formula for addition of such automorphisms:
\begin{multline*}
    f_1 \dotplus f_2 \doteq f_3 \define
        \exists f_1^\prime, f_2^\prime, f_3^\prime \Big(
            f_1 \simeq f_1^\prime \land
            f_2 \simeq f_2^\prime \land
            f_3 \simeq f_3^\prime \land\\
            \land
            \exists \eps_i \in \F_B \big(
                f_1^\prime(\eps_i) \ne \eps_i \land
                f_2^\prime(\eps_i) \ne \eps_i \land
                f_3^\prime(\eps_i) \ne \eps_i \land
                f_3^\prime = f_1^\prime f_2^\prime
            \big)
        \Big).
\end{multline*}

These formulas provide interpretation of the first order logic of the group~$A$.

\section{Interpretation of the second order logic of the group~$A$}

Recall, we are concerned with an unbounded reduced Abelian $p$-group~$A$ which has decomposition~$A = A_1 \oplus A_2 = A_{low} \oplus A_{fin}$, where~$A_{fin}$ has rank of basic subgroup equal to the final rank~$\mu_{fin}$. There is also the decomposition of the basic subgroup~$B = B_{low} \oplus B_{fin}$, where $B_{low} \subset A_{low}$, $B_{fin} \subset A_{fin}$. In this section, we express the second order logic, bounded with~$\mu_{fin}$, of the group~$A$. The idea of interpretation is the same as one in the paper~\cite{My1}.

We need a set of independent involution pairs where each pair corresponds to a direct summand of the group~$B_{fin}$. Each such direct summand must contain indecomposable direct summands of arbitrary big order. There must be total of~$\mu_{fin}$ such pairs. This set can be defined by Theorem~\ref{Shelah} in the same way as in the paper~\cite{My1}. Denote this set by~$\F_{fin}$.

On each direct summand corresponding to a pair~$\Pair$ from~$\F_{fin}$, we interpet an element of the group~$A$ with an automorphism in the same way as in the previous section with the only difference that indecomposable summands from~$A_{\Pair}$ should be used instead of indecomposable summands from~$B$. Two such automorphism are equivalent (i.\,e. encode the same group element) if they differ by an automorphism that is identical on~$A_{\Pair}$:
$$
f_1 \sim_{\Pair} f_2 \define
    \exists f \big(
        \forall \eps \in \Pair
            f(\eps) = \eps \land
        f_1 = f_2 f
    \big).
$$

The remaining part of the proof of expressibility of the second order logic is totally similar to one in the paper~\cite{My1}. Expressly, each sentence~$\phi$ of the bounded second order logic of the group~$A$ has a corresponding sentence~$\psi$ in the first order logic of the group~$\Aut A$ which is constructed by the certain algorithm. In this algorithm, all the object variables are replaced with the encoding automorphisms and all the $k$-ary predicates are replaced with $k$ automorphisms~$f_1, \dots, f_k$ which encode elements on each direct summand~$A_{\Pair}, \Pair \in \F_{fin}$. A~tuple~$(x_1, \dots, x_k)$ belongs to this predicate whenever there is a direct summand~$A_{\Pair}$ on which the automorphism~$f_i$ encodes the element~$x_i$, for each $i = 1, \dots, k$. This algorithm is described in details in the papers~\cite{Abelian},~\cite{My1}.

\end{document}